\numberwithin{equation}{section}
\newtheorem{theorem}[equation]{Theorem}
\newtheorem*{theorem*}{Theorem} \newtheorem{lemma}[equation]{Lemma}
\newtheorem*{conjecture*}{Mamma Conjecture}
\newtheorem*{conjecture1*}{Mamma Conjecture (revisited)}
\newtheorem{proposition}[equation]{Proposition}
\newtheorem*{corollary*}{Corollary}
\theoremstyle{remark}
\newtheorem{example}[equation]{Example}
\theoremstyle{remark}
\newtheorem{remark}[equation]{Remark}
\newcommand{\cA}{{\mathcal A}}
\newcommand{\cD}{{\mathcal D}}
\newcommand{\cE}{{\mathcal E}}
\newcommand{\cF}{{\mathcal F}}
\newcommand{\cG}{{\mathcal G}}
\newcommand{\cH}{{\mathcal H}}
\newcommand{\cO}{{\mathcal O}}
\newcommand{\cT}{{\mathcal T}}
\newcommand{\Spt}{\mathrm{Spt}}
\newcommand{\bbA}{\mathbb{A}}
\newcommand{\bbZ}{\mathbb{Z}}
\DeclareMathOperator{\id}{id}
\newcommand{\dgcat}{\mathrm{dgcat}} 
\newcommand{\bbK}{I\mspace{-6.mu}K}
\newcommand{\perf}{\mathrm{perf}}
\newcommand{\dg}{\mathrm{dg}}
\newcommand{\dgHo}{\mathrm{H}^0}
\newcommand{\too}{\longrightarrow}
\newcommand{\ie}{\textsl{i.e.}\ }
\let\oldmarginpar\marginpar
\def\marginpar#1{\oldmarginpar{\tiny #1}}
\def\multiset#1#2{\ensuremath{\left(\kern-.3em\left(\genfrac{}{}{0pt}{}{#1}{#2}\right)\kern-.3em\right)}}
\begin{document}

\title[Vanishing of the negative KH of quotient singularities]{Vanishing of the negative homotopy $K$-theory \\of quotient singularities}
\author{Gon{\c c}alo~Tabuada}

\address{Gon{\c c}alo Tabuada, Department of Mathematics, MIT, Cambridge, MA 02139, USA}
\email{tabuada@math.mit.edu}
\urladdr{http://math.mit.edu/~tabuada}
\thanks{The author was partially supported by a NSF CAREER Award}

\subjclass[2000]{14A22, 14B05, 14H20, 19E08, 19D35}
\date{\today}


\abstract{Making use of Gruson-Raynaud's technique of ``platification par \'eclatement'', Kerz and Strunk proved that the negative homotopy $K$-theory groups of a Noetherian scheme $X$ of Krull dimension $d$ vanish below $-d$. In this article, making use of noncommutative algebraic geometry, we improve this result in the case of quotient singularities by proving that the negative homotopy $K$-theory groups vanish below $-1$. Furthermore, in the case of cyclic quotient singularities, we provide an explicit ``upper bound'' for the first negative homotopy $K$-theory group.}
}

\maketitle
\vskip-\baselineskip
\vskip-\baselineskip

\section{Introduction and statement of results}\label{sec:introduction}
Given a Noetherian scheme $X$ of Krull dimension $d$, Kerz and Strunk proved in \cite[Thm.~1]{KS} that $KH_n(X)=0$ for every $n < -d$. The first goal of this article is to improve this vanishing result in the case where $X$ is a quotient singularity.

Let $k$ be a field of characteristic zero, $d\geq 2$ an integer, and $G$ a finite subgroup of $\mathrm{SL}_d(k)$. The group $G$ acts naturally on the polynomial ring $S:=k[t_1, \ldots, t_d]$ and the associated invariant ring $R:=S^G$ is Gorenstein and of Krull dimension $d$. Throughout the article we will assume that the local ring $R$ is an isolated singularity. 
\begin{example}[Cyclic quotient singularities]
Let $k$ be an algebraically closed field\footnote{More generally, it suffices that $k$ contains all the $m^{\mathrm{th}}$ roots of unit.} of characteristic zero, $m \geq 2$ an integer, $\zeta$ a primitive $m^{\mathrm{th}}$ root of unit, and $a_1, \ldots, a_d$ integers satisfying the following three conditions: $0 < a_j < m$, $\mathrm{gcd}(a_j,m)=1$ for every $1\leq j \leq d$, and $a_1 + \cdots + a_d=m$. When $G$ is the cyclic subgroup of $\mathrm{SL}_d(k)$ generated by $\mathrm{diag}(\zeta^{a_1}, \ldots, \zeta^{a_d})$, the ring $R$ is an isolated quotient singularity of Krull dimension $d$. For example, when $d=2$, $a_1=1$, and $a_2=m-1$, the ring $R$ identifies with the Kleinian singularity $k[u,v,w]/(u^m+vw)$ of type $A_{m-1}$.
\end{example}
\begin{remark}
As proved by Kurano-Nishi in \cite{KN}, all Gorenstein isolated quotient singularities of odd prime (Krull) dimension are cyclic. Moreover, in all the other (Krull) dimensions there exist {\em non-cyclic} Gorenstein isolated quotient singularities. For example, in dimension two all Gorenstein quotient singularities are isolated.
\end{remark}
Let us write $X$ for the affine (singular) $k$-scheme $\mathrm{Spec}(R)$. 
\begin{theorem}\label{thm:main1}
We have $KH_n(X)=0$ for every $n<-1$.
\end{theorem}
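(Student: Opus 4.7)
My plan is to replace the singular scheme $X=\operatorname{Spec}(R)$ by its noncommutative crepant resolution $A:=S\ast G$ (the skew group algebra) and exploit the fact that, as a dg category, $\perfdg(A)$ provides a ``noncommutative smooth model'' of $X$. The key inputs will be the finite global dimension of $A$, a computation of $KH(A)$ via equivariant homotopy invariance, and a comparison between the noncommutative motives of $R$ and $A$.

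\emph{Step 1 (compute $KH(A)$).} Since $\operatorname{char}(k)=0$ makes $|G|$ invertible in $k$ and $S=k[t_1,\ldots,t_d]$ is regular, the skew group algebra $A$ has finite global dimension; in particular $A$ is $K$-regular, so $KH_\ast(A)=K_\ast(A)$. Combining the Morita identification $K_\ast(A)\cong K_\ast^G(\mathbb{A}^d_k)$ with equivariant $\mathbb{A}^1$-homotopy invariance gives $K_\ast(A)\cong K_\ast(k[G])$. Because $k[G]$ is semisimple, $K_n(k[G])=0$ for every $n<0$, hence $KH_n(A)=0$ for every $n<0$.

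\emph{Step 2 (compare $R$ with $A$).} The center inclusion $R\hookrightarrow A$ induces a morphism $\cU^{\mathsf{loc}}(\perfdg(R))\to\cU^{\mathsf{loc}}(\perfdg(A))$ of noncommutative motives; let $\cC$ denote its cofiber. Over the regular open $U:=X\setminus\{0\}$ the group $G$ acts freely on the preimage in $\mathbb{A}^d_k$, so $A\otimes_R\cO_U$ is Morita equivalent to $\cO_U$; this forces $\cC$ to be ``supported at the isolated singular point'' and, by Buchweitz's theorem, to be controlled by the singularity category $\mathrm{D}_{\mathrm{sg}}(R)\simeq\underline{\mathrm{CM}}(R)$ of maximal Cohen--Macaulay $R$-modules. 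Applying the localizing invariant $KH$ then produces the long exact sequence
\[
\cdots\to KH_n(\cC)\to KH_n(R)\to KH_n(A)\to KH_{n-1}(\cC)\to\cdots.
\]

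\emph{Step 3 (bound $KH(\cC)$ and conclude).} For an isolated Gorenstein quotient singularity, $\underline{\mathrm{CM}}(R)$ is of finite representation type (classical McKay correspondence for modules) and admits a dg enhancement expressible through a finite-dimensional $k$-algebra, e.g.\ the Auslander algebra of $R$. Using the noncommutative-motivic tools from the author's earlier work one then shows $KH_n(\cC)=0$ for every $n<-1$; inserting this together with Step~1 into the long exact sequence of Step~2 yields $KH_n(R)=0$ for every $n<-1$, as desired. The main obstacle is precisely this last step: rigorously identifying $\cC$ and establishing the sharp vanishing $KH_n(\cC)=0$ for $n\le-2$; the potentially surviving contribution of $\cC$ in degree $-1$ is exactly what will produce the explicit upper bound for $KH_{-1}(X)$ announced in the abstract for cyclic quotient singularities.
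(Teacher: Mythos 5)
Your high-level strategy---replace $X$ by a ``smooth model,'' bound its negative $KH$, and then control the cofiber by the singularity category---is indeed the shape of the paper's argument. However, the details diverge substantially, and your Steps 2 and 3 contain gaps and one outright error.

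\emph{Choice of smooth model.} You use the skew group algebra $A=S\ast G$ (the NCCR). The paper instead uses $\cD^b_\dg(\mathrm{coh}(X))$, which fits into the tautological localization sequence $\perf_\dg(X)\to\cD^b_\dg(\mathrm{coh}(X))\to\cD^{\mathrm{sg}}_\dg(X)$, so the cofiber \emph{is literally} the dg singularity category---no identification is needed. The vanishing $KH_n(\cD^b_\dg(\mathrm{coh}(X)))=0$ for $n<0$ (Proposition~\ref{prop:key1}) is proved via the simplicial spectral sequence for $KH$, a Morita lemma (Lemma~\ref{lem:key}, due to Rouquier) that identifies $\cD^b_\dg(\mathrm{coh}(X))\otimes k[t_1,\dots,t_p]$ with $\cD^b_\dg(\mathrm{coh}(X[t_1,\dots,t_p]))$, and Schlichting's vanishing of negative $G$-theory of Noetherian schemes. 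Your Step~1 argument for $KH_n(A)=0$, $n<0$, via $K^G_*(\bbA^d)\simeq K_*(k[G])$, is fine, but the resulting cofiber of $\cU^{\mathsf{loc}}(\perf_\dg(R))\to\cU^{\mathsf{loc}}(\perf_\dg(A))$ is \emph{not} the singularity category; its identification is a genuinely nontrivial statement (this is the subject of Kalck--Yang--type ``relative singularity category'' results) and is left entirely hand-waved in your Step~2.

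\emph{The error.} Your Step~3 asserts that for an isolated Gorenstein quotient singularity $\underline{\mathrm{CM}}(R)$ is of finite representation type ``by the classical McKay correspondence.'' That is only true for $d=2$ (Auslander's theorem). For $d\geq 3$ it generally fails: the paper's own example with $d=m=3$, $a_1=a_2=a_3=1$, gives a ring of \emph{infinite} CM representation type. So your proposed source of control over the singularity part does not exist in the relevant generality, and you yourself flag Step~3 as ``the main obstacle.''

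\emph{What the paper actually does here.} The crucial ingredient you are missing is to pass to the \emph{graded} singularity category $\cD^{\mathrm{sg},\mathrm{gr}}_\dg(X)$, which by Iyama--Takahashi admits a \emph{full exceptional collection} $(\cE_1,\dots,\cE_v)$ with $\End(\cE_r)$ finite-dimensional division $k$-algebras---a structural result much stronger than (and independent of) finite CM type. Keller--Murfet--Van den Bergh identify $\cD^{\mathrm{sg}}_\dg(X)$ with the dg orbit category $\cD^{\mathrm{sg},\mathrm{gr}}_\dg(X)/(1)^\bbZ$, and the author's theorem on $\bbA^1$-homotopy invariants of dg orbit categories gives a distinguished triangle
\[
KH(\cD^{\mathrm{sg},\mathrm{gr}}_\dg(X))\xrightarrow{KH((1))-\id} KH(\cD^{\mathrm{sg},\mathrm{gr}}_\dg(X))\to KH(\cD^{\mathrm{sg}}_\dg(X))\to \Sigma KH(\cD^{\mathrm{sg},\mathrm{gr}}_\dg(X))\,.
\]
Since $KH(\cD^{\mathrm{sg},\mathrm{gr}}_\dg(X))\simeq\bigoplus_r KH(D_r)$ by additivity, and $KH_n(D_r)=\bbK_n(D_r)=0$ for $n<0$, one gets $KH_n(\cD^{\mathrm{sg}}_\dg(X))=0$ for $n<0$ (Proposition~\ref{prop:key2}). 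Combining this with Proposition~\ref{prop:key1} in the localization long exact sequence gives the theorem, and the degree-$0$ term of the same exact sequence is exactly what produces the upper bound on $KH_{-1}(X)$ in Theorem~\ref{thm:main2}. In short: the structure you guessed is right, but the key technical lever is the graded singularity category plus orbit-category $\bbA^1$-homotopy invariance, not finite CM type.
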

Intuitively speaking, Theorem \ref{thm:main1} shows that in the case of quotient singularities the vanishing of the negative homotopy $K$-theory groups is independent of the Krull dimension! To the best of the author's knowledge, this result is new in the literature. Such a vanishing result does not hold in general because, for every integer $d\geq 2$, there exist Gorenstein isolated singularities $X$ of Krull dimension $d$ with $KH_{-d}(X)\neq 0$; consult Reid \cite{Reid} for details. Theorem \ref{thm:main1} leads naturally to the following divisibility properties of nonconnective algebraic $K$-theory:
\begin{proposition}\label{prop:1}
\begin{itemize}
\item[(i)] The abelian group $\bbK_{-2}(X)$ is divisible;
\item[(ii)] The abelian groups $\bbK_n(X), n< -2$, are uniquely divisible.
\end{itemize}
\end{proposition}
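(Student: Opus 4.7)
The plan is to combine Theorem \ref{thm:main1} with a characteristic-zero-specific analysis of the fibre of the natural comparison map $\bbK(X)\to KH(X)$.

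First, I would write $F(X)$ for the homotopy fibre of $\bbK(X)\to KH(X)$ and establish that each homotopy group $\pi_n F(X)$ is a $\bbQ$-vector space. The natural tool is Weibel's spectral sequence
\[ E^1_{p,q} = N^p\bbK_q(X) \;\Longrightarrow\; KH_{p+q}(X), \]
whose columns with $p\geq 1$ assemble $\pi_\ast F(X)$. A classical result of Stienstra, later refined by Weibel, endows each Nil-group $NK_q(R)$ with a module structure over the big Witt ring $W(R)$, and iterating yields an analogous module structure on each $N^p\bbK_q(X)$ with $p\geq 1$. Since $k$ has characteristic zero, $W(R)$ is itself a $\bbQ$-algebra (via its ghost components), and hence every $N^p\bbK_q(X)$ with $p\geq 1$ is a $\bbQ$-vector space. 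Strong convergence of the spectral sequence in our range then forces $\pi_n F(X)$ to be a $\bbQ$-vector space as well.

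Next, I would feed this into the long exact sequence
\[ \cdots \to KH_{n+1}(X) \to \pi_n F(X) \to \bbK_n(X) \to KH_n(X) \to \pi_{n-1} F(X) \to \cdots \]
attached to the fibre sequence $F(X)\to \bbK(X)\to KH(X)$, and combine it with Theorem \ref{thm:main1}, which gives $KH_n(X)=0$ for every $n<-1$. For item (ii): whenever $n<-2$, both $KH_n(X)$ and $KH_{n+1}(X)$ vanish, so the sequence collapses to an isomorphism $\bbK_n(X)\cong \pi_n F(X)$; being a $\bbQ$-vector space, $\bbK_n(X)$ is then uniquely divisible. For item (i): with $n=-2$ we still have $KH_{-2}(X)=0$ (though $KH_{-1}(X)$ need not vanish), so the sequence yields a surjection $\pi_{-2} F(X)\twoheadrightarrow \bbK_{-2}(X)$, exhibiting $\bbK_{-2}(X)$ as a quotient of a $\bbQ$-vector space; since quotients of divisible abelian groups are divisible, this gives (i).

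The main obstacle I anticipate is the first step: concretely identifying $\pi_\ast F(X)$ with a construction manifestly built from iterated Nil-groups, and verifying convergence of Weibel's spectral sequence in the negative range. An alternative route, also available in characteristic zero, is to invoke Corti\~{n}as's proof of the KABI conjecture, which identifies $F(X)$ with a cyclic-homological fibre that is visibly a $\bbQ$-rational spectrum; that statement immediately confers the desired $\bbQ$-vector space structure on $\pi_\ast F(X)$ and bypasses any delicate spectral-sequence bookkeeping.
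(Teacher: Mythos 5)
Your proof is correct, and it does reach the result, but it takes a somewhat heavier route to an ingredient that the paper gets for free. Both your argument and the paper's rest on the same characteristic-zero fact, which is exactly Weibel's theorem (\cite{Weibel1}) that $\bbK(X;\bbZ/l^\nu)\simeq KH(X;\bbZ/l^\nu)$. Saying that this isomorphism holds for all prime powers $l^\nu$ is \emph{equivalent} to saying that the fibre $F(X)$ of $\bbK(X)\to KH(X)$ satisfies $F(X)\wedge S/l^\nu\simeq \ast$ for all $l^\nu$, i.e.\ that every $\pi_n F(X)$ is a $\bbQ$-vector space. So your ``rational fibre'' and the paper's ``agreement with finite coefficients'' are two phrasings of the same statement. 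The paper then packages the conclusion via the two universal coefficient sequences for $KH$ and $\bbK$ with $\bbZ/l^\nu$-coefficients, together with $KH_n(X)=0$ for $n<-1$, reading off $l^\nu$-divisibility (from the $\otimes\bbZ/l^\nu$ term) and absence of $l^\nu$-torsion (from the torsion term), and then letting $l^\nu$ vary; you instead plug the rationality of $\pi_\ast F(X)$ into the long exact sequence of the fibre sequence $F(X)\to \bbK(X)\to KH(X)$. Both work, and your observation that for $n=-2$ one only gets a \emph{surjection} $\pi_{-2}F(X)\twoheadrightarrow \bbK_{-2}(X)$ (hence only divisibility, not unique divisibility, since $KH_{-1}(X)$ need not vanish) correctly mirrors the asymmetry between parts (i) and (ii).

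Where you overcomplicate things is in establishing the rationality of $\pi_\ast F(X)$. Your primary route, via Stienstra's big-Witt-ring module structure on the iterated Nil-groups $N^p\bbK_q$ and Weibel's right half-plane spectral sequence, does have the convergence subtlety you flag; it can be handled, but it is real work. Your fallback route, via Corti\~nas's resolution of the KABI conjecture identifying $F(X)$ with a cyclic-homology fibre, is clean but quite a heavy hammer. The most economical input is simply Weibel's mod-$l^\nu$ comparison $\bbK(X;\bbZ/l^\nu)\simeq KH(X;\bbZ/l^\nu)$, already cited in the paper at the page reference \cite[Page 391]{Weibel1}; it delivers $F(X)/l^\nu\simeq\ast$ directly, with no spectral sequence and no cyclic homology, and then the rest of your long-exact-sequence argument goes through verbatim. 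In short: correct proof, equivalent key ingredient, but you reached for larger tools than necessary to rationalize the fibre.
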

\begin{remark}
Since the base field $k$ is of characteristic zero and the scheme $X$ is of finite type over $k$, it follows from the work of Corti\~nas-Haesemeyer-Schlichting-Weibel \cite{CHSW} that $\bbK_n(X)=0$ for every $n <-d$. Moreover, $\bbK_{-d}(X)\simeq KH_{-d}(X)$. Consequently, making use of Theorem \ref{thm:main1}, we conclude that~$\bbK_{-d}(X)=0$.
\end{remark}
The second goal of this article is to provide some information about $KH_{-1}(X)$ in the case of cyclic quotient singularities. Let $k$ be an algebraically closed field of characteristic zero and $(Q,\rho)$ the quiver with relations defined as follows:
\begin{itemize}
\item[(s1)] consider the quiver with vertices $\bbZ/m$ and with arrows $x^i_j\colon i \to i + a_j$, where $i \in \bbZ/m$ and $1 \leq j \leq d$. The relations $\rho$ are given by $x^{i+a_j}_{j'}x^i_j = x^{i+a_{j'}}_j x_{j'}^i$ for every $i \in \bbZ/m$ and $1\leq j,j' \leq d$.
\item[(s2)] remove from (s1) all arrows $x^i_j\colon i \to i'$ with $i > i'$;
\item[(s3)] remove from (s2) the vertex $0$.
\end{itemize}
Consider the $(m-1)\times (m-1)$ matrix $C$ such that $C_{ij}$ equals the number of arrows in $Q$ from $j$ to $i$ (counted modulo the relations). Let $M:=(-1)^{d-1}C(C^{-1})^T-\id$ and $\mathrm{M}\colon \oplus_{r=1}^{m-1} \bbZ \to \oplus_{r=1}^{m-1} \bbZ$ the associated (matrix) homomorphism.
\begin{theorem}\label{thm:main2}
The abelian group $KH_{-1}(X)$ is a quotient of the cokernel of $\mathrm{M}$.
\end{theorem}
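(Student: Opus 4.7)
My approach is to extract Theorem~\ref{thm:main2} from a localization exact sequence of dg categories tying together $\perfdg(X)$, a noncommutative (crepant) resolution $\cT$ of $X$ coming from the skew group algebra $S\#G$, and the finite-dimensional quiver algebra $\Lambda := kQ/\rho$ determined by (s1)--(s3). Since $G \subset \mathrm{SL}_d(k)$ is cyclic with $|G|$ invertible in $k$, the algebra $S\#G$ is regular of Krull dimension $d$ and is Morita equivalent to the path algebra of the full McKay quiver (s1), while its $d$-Calabi-Yau property is inherited from the crepancy condition $G \subset \mathrm{SL}_d(k)$. The prescriptions (s2)--(s3) amount to selecting a boundary piece: (s2) retains only the positively graded arrows with respect to the standard $\bbZ$-grading on $S$, and (s3) removes the structure-sheaf vertex. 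The resulting $\Lambda$ has finite global dimension (its quiver is acyclic), with $(m-1)$ indecomposable projectives $\{P_r\}$ indexed by the non-trivial characters of $G$.

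The first step is to apply homotopy $K$-theory to the cofiber sequence of dg enhancements
$$\perfdg(X) \longrightarrow \cT \longrightarrow \perfdg(\Lambda).$$
Since $\cT$ is regular, $KH_n(\cT) = K_n(\cT) = 0$ for $n < 0$, and since $\Lambda$ has finite global dimension, $KH_n(\Lambda) = K_n(\Lambda) = 0$ for $n < 0$. The negative part of the associated long exact sequence therefore collapses to
$$K_0(\cT) \xrightarrow{\ \varphi\ } K_0(\Lambda) \longrightarrow KH_{-1}(X) \longrightarrow 0,$$
exhibiting $KH_{-1}(X)$ as $\mathrm{coker}(\varphi)$, and in particular as a quotient of $K_0(\Lambda) \simeq \bbZ^{m-1}$.

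The second step is to show $\mathrm{image}(\mathrm{M}) \subseteq \mathrm{image}(\varphi)$. In the basis $\{[S_1],\dots,[S_{m-1}]\}$ of simple $\Lambda$-modules, the Cartan matrix is $C$ with $C_{ij} = \dim_k \Hom_\Lambda(P_j, P_i)$. The Serre-duality endomorphism on $K_0(\Lambda)$ induced from the $d$-Calabi-Yau structure of $\cT$ (via the mutation correction past $\perfdg(X)$) is $(-1)^{d-1}C(C^{-1})^T$: the factor $C(C^{-1})^T$ implements the Nakayama/Coxeter change of basis between simples and projectives, and the sign $(-1)^{d-1}$ records the shift discrepancy between the $d$-fold shift on $\cT$ and the induced Serre action on $K_0(\Lambda)$. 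For each simple $S_r$, a lift $\widetilde{S_r} \in \cT$ together with its Serre twist $S_\cT(\widetilde{S_r})$ and the mutation correction against $\perfdg(X)$ produces a class in $K_0(\cT)$ whose image under $\varphi$ equals $\big((-1)^{d-1}C(C^{-1})^T - \id\big)[S_r] = \mathrm{M}[S_r]$. This gives the desired inclusion and hence the surjection $\mathrm{coker}(\mathrm{M}) \twoheadrightarrow KH_{-1}(X)$.

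The main obstacle is the rigorous construction of the cofiber sequence of dg categories identifying the Verdier quotient of $\cT$ by $\perfdg(X)$ with $\perfdg(\Lambda)$ for the specific $\Lambda = kQ/\rho$ from (s1)--(s3), and the verification that the mutated Serre structure on $K_0(\Lambda)$ is precisely represented by the Coxeter-type matrix $(-1)^{d-1}C(C^{-1})^T$ with the correct sign and orientation. This categorical core---the cyclic-quotient-singularity analog of the McKay correspondence---is the crux; the remaining ingredients (vanishing of negative $K$-theory for the regular $\cT$ and the finite-gldim $\Lambda$, and the long exact sequence in $KH$ for cofiber sequences of dg categories) are standard.
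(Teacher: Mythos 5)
Your approach diverges substantially from the paper's, and it hinges on a cofiber sequence that is not established and that I do not believe holds in the form you state. You posit a localization sequence of dg categories
$$\perf_\dg(X) \longrightarrow \cT \longrightarrow \perf_\dg(\Lambda)\,,$$
with $\cT$ the NCCR coming from $S\#G$ and $\Lambda=kQ/\rho$ the acyclic quiver algebra from (s1)--(s3). But the only localization sequence attached to $X$ that is on solid ground is $\perf_\dg(X)\to\cD^b_\dg(\mathrm{coh}(X))\to\cD^{\mathrm{sg}}_\dg(X)$, and the quotient $\cD^{\mathrm{sg}}_\dg(X)$ is not Morita equivalent to $\perf_\dg(\Lambda)$: the dg singularity category of a Gorenstein isolated singularity is not smooth and proper in the sense that would identify it with perfect complexes over a finite-dimensional algebra of finite global dimension. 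If instead you mean the relative singularity category in the sense of Burban--Kalck and Kalck--Yang, that quotient is a bona fide dg category whose $\mathrm{H}^0$ is \emph{not} $\perf(\Lambda)$; the relationship to $\Lambda$ is via a tilting object in $\underline{\mathrm{MCM}}^{\mathrm{gr}}(R)$, not via a Verdier quotient of $\cT$ by $\perf_\dg(X)$. You flag the rigorous construction of this sequence as ``the main obstacle,'' and indeed it is: the argument does not go through without it.

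Even granting the cofiber sequence, the second step is only sketched. The assertion that for each simple $S_r$ a Serre twist in $\cT$, after a ``mutation correction past $\perf_\dg(X)$,'' produces a class in $K_0(\cT)$ mapping to $M[S_r]$ is not an argument; the sign $(-1)^{d-1}$ and the matrix $C(C^{-1})^T$ are precisely the content that needs to be extracted, and the proposal does not say where they come from beyond an appeal to Calabi--Yau heuristics. In the paper the Coxeter matrix arises cleanly and unambiguously: the singularity-category triangle gives that $KH_{-1}(X)$ is a quotient of $KH_0(\cD^{\mathrm{sg}}_\dg(X))$ (using Proposition 2.1 to kill $KH_{-1}(\cD^b_\dg(\mathrm{coh}(X)))$); the Keller--Murfet--Van den Bergh Morita equivalence $\cD^{\mathrm{sg},\mathrm{gr}}_\dg(X)/(1)^\bbZ\simeq\cD^{\mathrm{sg}}_\dg(X)$ together with the orbit-category triangle of \cite{Orbit} reduces $KH_0(\cD^{\mathrm{sg}}_\dg(X))$ to the cokernel of $K_0((1))-\id$ on $K_0(\underline{\mathrm{MCM}}^{\mathrm{gr}}(R))\simeq\bbZ^{m-1}$; and the Amiot--Iyama--Reiten tilting $\underline{\mathrm{MCM}}^{\mathrm{gr}}(R)\simeq\cD^b(\mathrm{mod}(A))$ sends $(1)$ to $\mathrm{S}^{-1}\Sigma^{d-1}=\tau\Sigma^{d-2}$, which on $K_0$ is $(-1)^{d-2}\Phi_A$ with $\Phi_A=-C(C^{-1})^T$. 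This is where $M$ actually comes from, and none of it is visible in your proposal. The algebra $\Lambda$ you write down does coincide with the paper's $A$, but it enters for a different reason (as $\End$ of a tilting object for the graded stable category, not as the quotient of the NCCR), and the graded singularity category and its degree-shift orbit construction are the missing ingredients.
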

Intuitively speaking, Theorem \ref{thm:main2} provides an explicit ``upper bound'' for the first negative homotopy $K$-theory group of cyclic quotient singularities.
\begin{example}[Kleinian singularities of type $A$]
When $d=2$, $a_1=1$, and $a_2=m-1$, the three steps (s1)-(s3) lead to the quiver $ Q\colon 1 \to 2 \to 3 \to \cdots \to m-2 \to m-1$ (without relations). Consequently, we obtain the following matrix:
$$
M= \begin{bmatrix}
-2 & 1 & 0  &\cdots &0 \\
-1 & -1 & \ddots &\ddots & \vdots\\
-1 & 0 & \ddots & \ddots & 0 \\
 \vdots& \vdots& \ddots & \ddots & 1 \\
 -1 &0 & \cdots& 0 & -1
\end{bmatrix}_{(m-1) \times (m-1)}
\,.$$
The cokernel of $\mathrm{M}$ is isomorphic to $\bbZ/m$; a generator is given by the image of the vector $(0,\ldots, 0,-1) \in \oplus_{r=1}^{m-1} \bbZ$. Thanks to Theorem \ref{thm:main2}, we hence conclude that $KH_{-1}(\mathrm{Spec}(k[u,v,w]/(u^m+vw)))$ is a quotient of $\bbZ/m$.
\end{example}

\begin{example}[A three dimensional singularity]
When $d=m=3$ and $a_1=a_2=a_3=1$, the three steps (s1)-(s3) lead to the quiver $Q\colon \xymatrix@C=1.7em@R=1em{1\ar@<0.7ex>[r]\ar[r]\ar@<-0.7ex>[r] & 2}$ (without relations). Consequently, we obtain the matrix $M=\begin{bmatrix} 0 & -3 \\3& -9\end{bmatrix}$. The cokernel of $\mathrm{M}$ is isomorphic to $\bbZ/3\times \bbZ/3$; generators are given by the images of $(1,0)$ and $(-1,-3)$. Thanks to Theorem \ref{thm:main2}, we hence conclude that $KH_{-1}(X)$ is a quotient of $\bbZ/3\times \bbZ/3$.
\end{example}
\subsection*{Preliminaries}
We will assume the reader is familiar with the language of differential graded (=dg) categories; consult Keller's ICM survey \cite{ICM-Keller}. Let $\dgcat(k)$ be the category of dg categories and dg functors. Given a (dg) $k$-algebra $A$, we will still write $A$ for the associated dg category with a single object. Consider the simplicial $k$-algebra $\Delta_n:=k[t_0, \ldots, t_n]/(\sum_i t_i -1), n \geq 0$, with faces and degenerancies: 
\begin{eqnarray*}
\partial_j(t_i) := \left\{ \begin{array}{lcr}
t_i & \text{if} & i <j \\
0 & \text{if} & i =j \\
t_{i-1} & \text{if} & i > j \\
\end{array} \right.
&
&
\delta_j(t_i) := \left\{ \begin{array}{lcr}
t_i & \text{if} & i <j\\
t_i + t_{i+1} & \text{if} & i =j \\
t_{i+1} & \text{if} & i > j \\
\end{array} \right.\,.
\end{eqnarray*} 
Following \cite[\S2.2.5]{book}, the {\em homotopy $K$-theory functor} is defined by the formula
\begin{eqnarray*}
KH\colon \dgcat(k) \too \Spt && \cA \mapsto \mathrm{hocolim}_{n\geq 0}\bbK(\cA\otimes \Delta_n)\,,
\end{eqnarray*}
where $\Spt$ stands for the homotopy category of spectra.
\section{Proof of Theorem \ref{thm:main1}}
Let us write $\cD^b(\mathrm{Mod}(X))$ for the bounded derived category of $\cO_X$-modules, $\cD^b(\mathrm{coh}(X))$ for the bounded derived category of coherent $\cO_X$-modules, and $\perf(X)$ for the category of perfect complexes of $\cO_X$-modules. These categories admit canonical dg enhancements $\cD_\dg^b(\mathrm{Mod}(X))$, $\cD^b_\dg(\mathrm{coh}(X))$, and $\perf_\dg(X)$, respectively; see \cite[\S4.4-\S4.6]{ICM-Keller}\cite{LO}. Recall from Orlov \cite{Orlov} that the dg category of singularities $\cD_\dg^{\mathrm{sg}}(X)$ is defined as the Drinfeld's dg quotient $\cD^b_\dg(\mathrm{coh}(X))/\perf_\dg(X)$. Consequently, we have the following short exact sequence of dg categories (see \cite[\S4.6]{ICM-Keller}):
$$
0 \too \perf_\dg(X) \too \cD^b_\dg(\mathrm{coh}(X)) \too \cD_\dg^{\mathrm{sg}}(X) \too 0\,.
$$
Since homotopy $K$-theory $KH$ is a localizing invariant of dg categories (see \cite[\S8.2.2]{book}) and $KH(\perf_\dg(X))$ is isomorphic to $KH(X)$ (see \cite[\S2.2.5]{book}), we obtain an induced distinguished triangle of spectra:
\begin{equation}\label{eq:triangle}
KH(X) \too KH(\cD^b_\dg(\mathrm{coh}(X))) \too KH(\cD_\dg^{\mathrm{sg}}(X)) \too \Sigma KH(X)\,.
\end{equation}
\begin{proposition}\label{prop:key1}
We have $KH_n(\cD^b_\dg(\mathrm{coh}(X)))=0$ for every $n <0$.
\end{proposition}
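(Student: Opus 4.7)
The plan is to identify $KH(\cD^b_\dg(\mathrm{coh}(X)))$ with Quillen's $G$-theory spectrum $G(X)$ of the Noetherian scheme $X$, and then invoke two classical results of Quillen.

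First, since $X=\mathrm{Spec}(R)$ is affine, $\cD^b(\mathrm{coh}(X))$ is canonically identified with $\cD^b(\mathrm{mod}(R))$, whose underlying triangulated category carries a bounded $t$-structure with Noetherian heart $\mathrm{mod}(R)$. By the theorem of the heart (Neeman/Barwick, in the dg formulation recorded in \cite[\S8]{book}), the nonconnective $K$-theory of the dg enhancement $\cD^b_\dg(\mathrm{coh}(X))$ coincides with Quillen's $K$-theory of the abelian category $\mathrm{mod}(R)$, i.e.\ with $G(R)=G(X)$. In particular, this spectrum is connective, so $K_n(\cD^b_\dg(\mathrm{coh}(X)))=0$ for every $n<0$.

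Second, I would argue that this identification propagates through the simplicial construction defining $KH$. Since $\Delta_n \cong k[t_1,\dots,t_n]$ is a smooth (hence flat) commutative $k$-algebra, standard properties of dg tensor products give a Morita equivalence
$$\cD^b_\dg(\mathrm{coh}(X)) \otimes_k \Delta_n \;\simeq\; \cD^b_\dg(\mathrm{coh}(X\times_k \bbA^n)),$$
so the theorem of the heart yields $K(\cD^b_\dg(\mathrm{coh}(X))\otimes \Delta_n) \simeq G(X\times \bbA^n)$. Now Quillen's fundamental theorem (the $\bbA^1$-homotopy invariance of $G$-theory for Noetherian schemes) furnishes $G(X\times \bbA^n)\simeq G(X)$ compatibly with the face and degeneracy maps, so the simplicial spectrum defining $KH$ is degreewise equivalent to the constant one at $G(X)$. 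Taking $\hocolim$ then gives $KH(\cD^b_\dg(\mathrm{coh}(X)))\simeq G(X)$.

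Combining these two steps, $KH_n(\cD^b_\dg(\mathrm{coh}(X)))\cong G_n(X)=0$ for all $n<0$, as required.

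The main obstacle is the compatibility check at the dg level: verifying that the tensor product of dg categories $\cD^b_\dg(\mathrm{coh}(X))\otimes_k \Delta_n$ really is Morita equivalent to $\cD^b_\dg(\mathrm{coh}(X\times\bbA^n))$, and that the resulting $K$-theory identifications with $G$-theory are natural in $n$ so as to interact correctly with the $\hocolim$. Once this bookkeeping is in place, the genuine input reduces to two Quillen-era facts: connectivity of $G$-theory (essentially by construction) and $\bbA^1$-homotopy invariance of $G$-theory for Noetherian schemes. Notably, no hypothesis on the Krull dimension of $X$ enters this argument.
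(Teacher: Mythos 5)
Your overall strategy is essentially the paper's: reduce $\bbK$ of $\cD^b_\dg(\mathrm{coh}(X))$ and its polynomial extensions to $G$-theory, then invoke vanishing of negative $G$-theory (equivalently, $\bbA^1$-homotopy invariance plus connectivity). However, the step you flag as ``the main obstacle'' and then set aside as bookkeeping is in fact the mathematical heart of the proof, and your stated reason for why it should be easy is incorrect. You write that the Morita equivalence $\cD^b_\dg(\mathrm{coh}(X))\otimes_k\Delta_n \simeq \cD^b_\dg(\mathrm{coh}(X\times\bbA^n))$ follows from ``standard properties of dg tensor products'' because $\Delta_n$ is smooth. Smoothness of the affine-space factor is not the issue. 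When $X$ is \emph{singular}, $\cD^b_\dg(\mathrm{coh}(X))$ differs from $\perf_\dg(X)$, and the K\"unneth-type compatibility $\perf_\dg(X)\otimes\perf_\dg(Y)\simeq\perf_\dg(X\times Y)$ that you have in mind does not apply: there is no formal reason that the external-product dg functor $\cD^b_\dg(\mathrm{coh}(X))\otimes\perf_\dg(\bbA^n)\to\cD^b_\dg(\mathrm{coh}(X\times\bbA^n))$ should be essentially surjective up to thick closure. Establishing this is precisely the content of the paper's Lemma~\ref{lem:key}, attributed to Rouquier and proved by a genuine argument: it proceeds by induction on $\dim(X)$, devissages an arbitrary object of $\cD^b(\mathrm{coh}(X[t]))$ into a perfect complex and a complex supported on the singular locus $X^s[t]$, applies the inductive hypothesis to $X^s$, and controls generators throughout. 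None of this is automatic, and the Krull dimension of $X$ \emph{does} enter --- through the induction --- contrary to your closing remark.

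There is also a secondary, more stylistic difference worth noting. Rather than trying to show the simplicial spectrum defining $KH$ is degreewise equivalent to a constant one (which would require checking naturality of the $G$-theory identifications against the face and degeneracy maps), the paper runs the standard spectral sequence $E^1_{pq}=N^p\bbK_q(\cA)\Rightarrow KH_{p+q}(\cA)$ and observes that $N^p\bbK_q$ is a subgroup of $\bbK_q(\cA[t_1,\ldots,t_p])\cong G_q(X[t_1,\ldots,t_p])$, which vanishes for $q<0$ by Schlichting. Since $N^p\bbK_q$ is defined as an intersection of kernels, no compatibility with the simplicial structure needs to be verified beyond what is built into its definition, and the vanishing of $KH_n$ for $n<0$ falls out immediately. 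Your route would, if carried out, establish the stronger statement $KH(\cD^b_\dg(\mathrm{coh}(X)))\simeq G(X)$, but it buys this at the cost of exactly the functoriality bookkeeping that the spectral-sequence argument is designed to avoid. In short: correct plan, but the Morita equivalence must be proved, not assumed, and the paper's spectral-sequence shortcut is the cleaner way to finish once that lemma is in hand.
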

\begin{proof}
Given a dg category $\cA$ and an integer $p\geq 1$, let us write $\cA[t_1, \ldots, t_p]$ for the tensor product $\cA\otimes k[t_1, \ldots, t_p]$. Since homotopy $K$-theory is defined as the ``realization'' of a simplicial spectrum, we have the following standard convergent right half-plane spectral sequence :
\begin{equation}\label{eq:spectral-seq}
E^1_{pq} = N^p \bbK_q(\cA) \Rightarrow KH_{p+q}(\cA)\,.
\end{equation}
Here, $N^0\bbK_q(\cA):=\bbK_q(\cA)$ and $N^p\bbK_q(\cA), p \geq 1$, is defined as the intersection
$$\bigcap^p_{i=1} \mathrm{Kernel} \left(\bbK_q(\cA[t_1,\ldots, t_p]) \stackrel{\id \otimes (t_i=0)}{\too} \bbK_q(\cA[t_1, \ldots, \widehat{t_i}, \ldots, t_p])\right)\,.$$
Let us treat now the case where $\cA$ is the dg category $\cD^b_\dg(\mathrm{coh}(X))$. Using the fact that the dg category $k[t_1, \ldots, t_p]$ is Morita equivalent to $\perf(\mathrm{Spec}(k[t_1, \ldots, t_p]))\simeq \perf(\mathrm{Spec}(k[t_1]))\otimes \cdots \otimes \perf(\mathrm{Spec}(k[t_p]))$, Lemma \ref{lem:key} below (applied inductively) implies that $\cD^b_\dg(\mathrm{coh}(X))[t_1, \ldots, t_p]$ is Morita equivalent to $\cD^b_\dg(\mathrm{coh}(X[t_1,\ldots, t_p]))$. Consequently, since nonconnective algebraic $K$-theory is invariant under Morita equivalences, we obtain the following identifications
$$ \bbK(\cD^b_\dg(\mathrm{coh}(X))[t_1, \ldots, t_p]) \simeq \bbK(\cD^b_\dg(\mathrm{coh}(X[t_1, \ldots, t_p]))) = G(X[t_1, \ldots, t_p])\,,$$
where $G$ stands for $G$-theory; see \cite[Thm.~5.1]{ICM-Keller}. As proved by Schlichting in \cite[Thm.~7]{Schlichting}, the abelian groups $G_q(X[t_1, \ldots, t_p]), q<0$, are zero; this uses the fact that $X$, and hence $X[t_1, \ldots, t_p]$, is Noetherian. Therefore, we conclude that $N^p\bbK_q(\cD^b_\dg(\mathrm{coh}(X)))=0$ for every $p\geq 0$ and $q<0$. This implies that the spectral sequence \eqref{eq:spectral-seq} degenerates below the line $q=-1$ and consequently that
\begin{eqnarray*}
KH_n(\cD^b_\dg(\mathrm{coh}(X)))\simeq \bbK_n(\cD^b_\dg(\mathrm{coh}(X)))=G_n(X) && n <0\,.
\end{eqnarray*}
The proof follows now from the fact that $G_n(X)=0$ for every $n<0$.
\end{proof}
\begin{lemma}[Rouquier\footnote{Lemma \ref{lem:key} is a particular case of a general result of Rapha\"el Rouquier \cite{Rouquier}. The author is very grateful to Rapha\"el for kindly explaining him his ideas.}]\label{lem:key}
Given an affine $k$-scheme of finite type $X=\mathrm{Spec}(R)$, we have the following induced Morita equivalence:
\begin{eqnarray*}
\cD^b_\dg(\mathrm{coh}(X)) \otimes \perf_\dg(\mathrm{Spec}(k[t])) \too \cD^b_\dg(\mathrm{coh}(X[t])) && (\cF, \cF') \mapsto \cF\boxtimes \cF'\,.
\end{eqnarray*}
\end{lemma}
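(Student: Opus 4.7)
The plan is to verify the two defining properties of a Morita equivalence of dg categories for the dg functor $\Phi\colon \cD^b_\dg(\mathrm{coh}(X)) \otimes \perf_\dg(\Spec(k[t])) \to \cD^b_\dg(\mathrm{coh}(X[t]))$ sending $(\cF,\cF') \mapsto \cF\boxtimes\cF' = p^\ast\cF \otimes^L q^\ast\cF'$, where $p\colon X[t]\to X$ and $q\colon X[t]\to\mathbb{A}^1_k$ denote the two canonical projections. The two properties are (i) quasi-fully-faithfulness on morphism complexes and (ii) thick generation of the target by the essential image of $\Phi$.

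For (i), by construction of the tensor product in $\dgcat(k)$, the induced map on morphism complexes is the natural K{\"u}nneth arrow
$$\RHom_X(\cF,\cG)\otimes^L_k\RHom_{\mathbb{A}^1}(\cF',\cG') \too \RHom_{X[t]}(\cF\boxtimes\cF',\cG\boxtimes\cG').$$
I would verify it is a quasi-isomorphism by a standard computation: dualizability of the perfect complex $\cF'$ gives $\uRHom_{X[t]}(q^\ast\cF',q^\ast\cG')\simeq q^\ast\uRHom_{\mathbb{A}^1}(\cF',\cG')$; then the projection formula for $p$, adjunction, and flat base change along $q$ (flat since $q$ is the pullback of the flat morphism $\mathbb{A}^1\to\Spec(k)$) combine to rewrite the right hand side as $\RHom_X(\cF,\cG\otimes_k\RHom_{\mathbb{A}^1}(\cF',\cG'))$, i.e.\ the left hand side. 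This step I expect to be essentially routine.

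For (ii), since $\mathbb{A}^1_k$ is regular of dimension one, $\perf_\dg(\mathbb{A}^1_k)$ is thickly generated by $\cO_{\mathbb{A}^1}$, so the essential image of $\Phi$ thickly contains every pullback $p^\ast\cF$ with $\cF\in\cD^b_\dg(\mathrm{coh}(X))$, and it remains to prove that such pullbacks thickly generate $\cD^b_\dg(\mathrm{coh}(X[t]))$. The approach is to exploit the two-term Koszul resolution
$$0 \too \cO_{X[t]\times_X X[t]} \xrightarrow{\,s - t\,} \cO_{X[t]\times_X X[t]} \too (\Delta_{X[t]/X})_\ast\cO_{X[t]} \too 0$$
of the relative diagonal: convolving a coherent $\cM$ on $X[t]$ against this Koszul complex formally realizes $\cM$ as the two-term complex $[\,p^\ast p_\ast\cM \xrightarrow{s-t} p^\ast p_\ast\cM\,]$.

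The main obstacle is precisely this generating step, since $p\colon X[t]\to X$ is \emph{not} proper and so $p_\ast\cM$ is in general not coherent; the naive Koszul complex above therefore does not a priori live in $\cD^b_\dg(\mathrm{coh}(X[t]))$. The crux of the argument will be to truncate $p_\ast\cM$ by a carefully chosen finitely generated $R$-submodule $N$ --- stable under enough of the $t$-action that the Koszul-style quotient $[\,p^\ast N \xrightarrow{s-t} p^\ast N\,]$ terminates in finitely many such iterations and recovers $\cM$ in the thick closure --- using the finite generation of $\cM$ as an $R[t]$-module. A key feature of the argument is that it uses only the regularity of the auxiliary factor $\mathbb{A}^1_k/k$ (of relative dimension one), and makes no regularity assumption on $X$; this is what permits the lemma to hold for arbitrary affine finite-type $k$-schemes $X$, including the singular ones relevant to Proposition~\ref{prop:key1}.
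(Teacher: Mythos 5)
Your proposal takes a genuinely different route from the paper: you try to verify the two defining conditions of a Morita equivalence directly (quasi-full-faithfulness via a K\"unneth computation, and essential surjectivity up to thick closure via a Koszul resolution of the relative diagonal), whereas the paper invokes Rouquier's theorem that $\cD^b(\mathrm{coh}(X))$ admits a compact generator $\cG$, proves by induction on $\dim X$ --- using the devissage triangle $\cH_1 \to \cH\oplus\Sigma\cH \to \cH_2$ with $\cH_1$ perfect and $\cH_2$ supported on the singular locus $X^s[t]$ --- that $\cG\boxtimes\cO_{\bbA^1}$ generates $\cD^b(\mathrm{coh}(X[t]))$, and then reduces the whole lemma to the single quasi-isomorphism $\REnd(\cG[t])\simeq\REnd(\cG)[t]$ via the restriction adjunction and \cite[Cor.~6.17]{Rouquier1}.

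Your step (i) is plausibly fine: the natural map $\RHom_X(\cF,\cG)\otimes_k\RHom_{\bbA^1}(\cF',\cG')\to\RHom_{X[t]}(\cF\boxtimes\cF',\cG\boxtimes\cG')$ reduces, by a thick-subcategory argument in the $\cF'$-variable, to $\cF'=\cO_{\bbA^1}$, and then (affine case, $\cG$ bounded) to the observation that each total degree of the relevant Hom-complex is a \emph{finite} product, so tensoring over $k$ commutes. No serious obstacle there.

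The genuine gap is in step (ii), and you half-acknowledge it yourself. Reducing to $\cF'=\cO_{\bbA^1}$, you must show $\cD^b(\mathrm{coh}(X[t]))$ is thickly generated by the objects $N\otimes_R R[t]$ with $N$ a finitely generated $R$-module. The Koszul presentation
$0\to M|_R\otimes_R R[s]\xrightarrow{\,s-t\,}M|_R\otimes_R R[s]\to M\to 0$
exists, but $M|_R$ is not finitely generated, as you note. Your proposed repair --- ``truncate $p_*\cM$ by a carefully chosen f.g.\ $R$-submodule $N$, stable under enough of the $t$-action, so that the quotient $[p^*N\xrightarrow{s-t}p^*N]$ terminates after finitely many iterations'' --- is not substantiated, and as literally phrased it cannot work: if $N\subseteq M$ is f.g.\ over $R$ with $tN\subseteq N$, then by Cayley--Hamilton $N$ is a $t$-torsion $R[t]$-module, so for $t$-torsion-free $M$ the only such $N$ is $0$. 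Taking $N_0$ any f.g.\ generating $R$-submodule and $N_1=N_0+tN_0$, the two-term complex $[N_0[s]\to N_1[s]]$ augmenting to $M$ is in general \emph{not} a resolution (already fails for $R=k$, $M=k[t]/(t^2)$, $N_0=k\cdot\bar{1}$), so one is left with an error term that must be controlled by an induction you do not set up, with no decreasing invariant specified. This generation step is the crux of the lemma --- it is exactly where the paper invests all its effort, via Rouquier's generator and the singular-locus induction --- and your sketch does not close it. Your closing remark that the argument ``uses only the regularity of the auxiliary factor $\bbA^1_k$'' is also slightly misleading: the difficulty lies entirely in the non-regularity of $X$, which prevents $\cD^b(\mathrm{coh}(X))$ from being generated by perfect complexes and is precisely what the paper's devissage through $X^s$ is designed to handle.
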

\begin{proof}
Recall first that $\cO_X$ and $\cO_{\mathrm{Spec}(k[t])}$ are generators of the triangulated categories $\perf(X)$ and $\perf(\mathrm{Spec}(k[t]))$, respectively. As proved by Rouquier in \cite[Thm.~7.38]{Rouquier1}, the triangulated category $\cD^b(\mathrm{coh}(X))$ admits a generator $\cG$. We claim that $\cG[t]:=\cG \boxtimes \cO_{\mathrm{Spec}(k[t])}$ is a generator of the triangulated category $\cD^b(\mathrm{coh}(X[t]))$. In order to prove this claim, we will proceed by induction on the dimension of $X$. The case $\mathrm{dim}(X)=0$ is clear. Assume then that the claim holds for every affine $k$-scheme of finite type of dimension $< \mathrm{dim}(X)$. Let us write $\cT$ for the smallest thick triangulated subcategory of $\cD^b(\mathrm{coh}(X[t]))$ containing the object $\cG[t]$. Given any object $\cH \in \cD^b(\mathrm{coh}(X[t]))$, we need to show that $\cH \in \cT$. Let us denote by $i\colon X^s \hookrightarrow X$ the singular locus of $X$ and by $\cD^b_{X^s[t]}(\mathrm{coh}(X[t]))$ the full triangulated subcategory of $\cD^b(\mathrm{coh}(X[t]))$ consisting of those bounded complexes of coherent $\cO_{X[t]}$-modules that are supported  on the closed subscheme $X^s[t]$. Similarly to the proof of \cite[Thm.~7.38]{Rouquier1}, we have a distinguished triangle 
$$\cH_1 \too \cH \oplus \Sigma \cH \too \cH_2 \too \Sigma \cH_1$$
with $\cH_1 \in \perf(X[t])$ and $\cH_2 \in \cD^b_{X^s[t]}(\mathrm{coh}(X[t]))$. Therefore, in order to prove our claim, it suffices to show that $\cH_1$ and $\cH_2$ belong to $\cT$. Since $\cG$ is a generator of $\cD^b(\mathrm{coh}(X))$, the object $\cO_{X[t]}=\cO_X \boxtimes \cO_{\mathrm{Spec}(k[t])}$ belongs to $\cT$. Consequently, since $\cO_{X[t]}$ is a generator of $\perf(X[t])$, the object $\cH_1$ also belongs to $\cT$. In what concerns the object $\cH_2$, let us start by choosing a generator $\cG'$ of the triangulated category $\cD^b(\mathrm{coh}(X^s))$. Since $\mathrm{dim}(X^s)< \mathrm{dim}(X)$, the induction assumption implies that $\cG'[t]:=\cG' \boxtimes \cO_{\mathrm{Spec}(k[t])}$ is a generator of the triangulated category $\cD^b(\mathrm{coh}(X^s[t]))$. Now, a proof similar to the one of \cite[Thm.~6.8(i)]{Gysin}, with \cite[Prop.~(19.1.1)]{EGA} replaced by the finite type assumption of $X$ and with \cite[Prop.~6.1]{Neeman} replaced by \cite[Prop.~6.6]{Rouquier}, shows that $i_\ast(\cO_{X^s}) \boxtimes \cO_{\mathrm{Spec}(k[t])}$ is a generator of the triangulated category $\cD^b_{X^s[t]}(\mathrm{coh}(X[t]))$. This implies that $i_\ast(\cG')[t]:=i_\ast(\cG') \boxtimes \cO_{\mathrm{Spec}(k[t])}$ is also a generator of $\cD^b_{X^s[t]}(\mathrm{coh}(X[t]))$. Since $i_\ast(\cG')$ belongs to the category $\cD^b(\mathrm{coh}(X))$ and $\cG$ is a generator of $\cD^b(\mathrm{coh}(X))$, the object $i_\ast(\cG')[t]$ belongs to $\cT$. Consequently, $\cH_2$ also belongs to $\cT$. This concludes the proof of our claim. 

Since $\cG$ is a generator of $\cD^b(\mathrm{coh}(X))$ and $\cG[t]$ a generator of $\cD^b(\mathrm{coh}(X[t]))$, we have induced Morita equivalences between dg categories
\begin{eqnarray*}
\cD^b_\dg(\mathrm{coh}(X)) \simeq \perf_\dg({\bf R}\mathrm{End}(\cG)) && \cD^b_\dg(\mathrm{coh}(X[t])) \simeq \perf_\dg({\bf R}\mathrm{End}(\cG[t]))\,,
\end{eqnarray*}
where ${\bf R}\mathrm{End}(\cG)$ and ${\bf R}\mathrm{End}(\cG[t])$ stand for the (derived) dg $k$-algebra of endomorphisms of $\cG$ and $\cG[t]$, respectively. Consider the following classical adjunction
\begin{equation}\label{eq:adjunction-last}
\xymatrix{
\cD^b(\mathrm{Mod}(X[t])) \ar@<1ex>[d]^-{\mathrm{Res}} \\
\cD^b(\mathrm{Mod}(X)) \ar@<1ex>[u]^-{\cF \mapsto \cF[t]}\,.
}
\end{equation}
By assumption, the affine $k$-scheme $X$ is of finite type and hence Noetherian. Therefore, since $\cG$ belongs to the triangulated subcategory $\cD^b(\mathrm{coh}(X))\subset \cD^b(\mathrm{mod}(X))$ and $\mathrm{Res}(\cG[t])\simeq \oplus^\infty_{i=1} \cG$, the combination of \cite[Cor.~6.17]{Rouquier1} with \eqref{eq:adjunction-last} implies that the dg $k$-algebra ${\bf R}\mathrm{End}(\cG[t])$ is quasi-isomorphic to ${\bf R}\mathrm{End}(\cG)[t]$. This concludes the proof of Lemma~\ref{lem:key} because $\perf_\dg(\mathrm{Spec}(k[t]))$ is Morita equivalent to $k[t]$.
\end{proof}
By combining Proposition \ref{prop:key1} with the long exact sequence of (stable) homotopy groups associated to the distinguished triangle of spectra \eqref{eq:triangle}, we conclude that $KH_n(\cD_\dg^{\mathrm{sg}}(X)) \simeq KH_{n-1}(X)$ for every $n<0$. Consequently, the proof of Theorem \ref{thm:main1} follows from the following result:
\begin{proposition}\label{prop:key2}
We have $KH_n(\cD_\dg^{\mathrm{sg}}(X))=0$ for every $n<0$.
\end{proposition}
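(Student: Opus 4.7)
The plan is to exhibit $\cD_\dg^{\mathrm{sg}}(X)$ as Morita equivalent to $\perf_\dg(\bfE)$ for a ``small'' dg $k$-algebra $\bfE$ of finite-dimensional cohomology, and then to run an analogue of the spectral-sequence argument of Proposition \ref{prop:key1} for $\bfE$.

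First, I would apply Buchweitz's theorem (valid since $R$ is Gorenstein): it identifies $\cD_\dg^{\mathrm{sg}}(X)$ with the dg enhancement $\underline{\mathrm{MCM}}_\dg(R)$ of the stable category of maximal Cohen-Macaulay $R$-modules. Next, since $G \subset \mathrm{SL}_d(k)$ is finite and acts freely on $\mathrm{Spec}(S) \setminus \{0\}$, a classical theorem of Auslander produces a canonical decomposition $S \simeq \bigoplus_V V \otimes_k M_V$ as $R$-modules, indexed by the irreducible $G$-representations, in which the $M_V$ exhaust the indecomposable maximal Cohen-Macaulay $R$-modules up to isomorphism; in particular there are only finitely many such indecomposables.

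Second, let $M := \bigoplus_{V \neq \mathrm{triv}} M_V$ and set $\bfE := \underline{\REnd}_R(M)$, the derived endomorphism dg $k$-algebra taken inside $\underline{\mathrm{MCM}}_\dg(R)$. Since $M$ is a classical generator of the stable category, $\cD_\dg^{\mathrm{sg}}(X)$ is Morita equivalent to $\perf_\dg(\bfE)$. The $k$-algebra $H^0(\bfE) = \underline{\End}_R(M)$ is finite-dimensional, and each higher cohomology group $H^i(\bfE)$ is finite-dimensional as well (and vanishes outside $0 \le i \le d-1$) by Auslander-Reiten duality on the $(d-1)$-Calabi-Yau category $\underline{\mathrm{MCM}}(R)$.

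Third, I would apply to $\bfE$ the same right half-plane convergent spectral sequence used in the proof of Proposition \ref{prop:key1}, namely
\[
E^1_{pq} = N^p \bbK_q(\bfE) \Longrightarrow KH_{p+q}(\bfE).
\]
To conclude $KH_n(\bfE) = 0$ for $n < 0$, it suffices to prove that $\bbK_q(\bfE \otimes k[t_1, \ldots, t_p]) = 0$ whenever $q < -p$, for then $E^1_{pq} = 0$ on the whole region $p+q<0$.

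The main obstacle is precisely this last vanishing: unlike in Proposition \ref{prop:key1}, where Morita-equivalence with $\cD^b_\dg(\mathrm{coh}(X[t_1,\ldots,t_p]))$ reduced matters to Schlichting's vanishing of negative $G$-theory for a Noetherian scheme, here $\bfE$ is genuinely a dg algebra with nontrivial cohomology in positive degrees. The natural route is to filter $\bfE$ by its Postnikov truncations $\bfE = \tau_{\le d-1}\bfE \to \tau_{\le d-2}\bfE \to \cdots \to \tau_{\le 0} \bfE = H^0(\bfE)$, reducing by induction on the Postnikov length (and using Goodwillie-Cortinas style control of the square-zero extensions $\tau_{\le i}\bfE \to \tau_{\le i-1}\bfE$) to the analogous vanishing for the finite-dimensional $k$-algebra $H^0(\bfE)$. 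The polynomial extension $H^0(\bfE)[t_1, \ldots, t_p]$ is Noetherian of Krull dimension $p$, whence Kerz-Strunk's theorem \cite[Thm.~1]{KS} yields $\bbK_q(H^0(\bfE)[t_1, \ldots, t_p]) = 0$ for $q < -p$, and a diagram chase propagates this to each $\tau_{\le i}\bfE$ and finally to $\bfE$ itself.
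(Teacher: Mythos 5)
Your proposal has two genuine gaps, each fatal on its own, and together they point to the key idea you are missing: the paper deliberately works with the \emph{graded} singularity category rather than the ungraded one.

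First, the appeal to Auslander's theorem fails for $d \geq 3$. The isotypic decomposition $S \simeq \bigoplus_V V \otimes_k M_V$ does produce MCM $R$-modules $M_V$, and for $d=2$ these exhaust the indecomposables (Auslander/McKay). But for $d \geq 3$ the invariant ring $R = S^G$ is almost never of finite Cohen--Macaulay representation type: by theorems of Herzog and Buchweitz--Greuel--Schreyer, a Gorenstein isolated singularity of dimension $\geq 3$ with finite CM type must be an ADE hypersurface, which the quotient singularities treated here generally are not (e.g.\ the $\tfrac{1}{3}(1,1,1)$ singularity of the paper's second example). So $\bigoplus_{V\neq\mathrm{triv}} M_V$ does not generate $\underline{\mathrm{MCM}}(R)$ in these cases, and the proposed $\bfE$ is not Morita-equivalent to $\cD^{\mathrm{sg}}_\dg(X)$.

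Second, and more fundamentally, the cohomology of $\bfE := \underline{\REnd}_R(M)$ is not concentrated in degrees $[0,d-1]$. In the stable category $\underline{\mathrm{MCM}}(R)$ the suspension $\Sigma$ is an \emph{autoequivalence} (Buchweitz), so $H^i(\bfE)=\underline{\mathrm{Hom}}(M,\Sigma^i M)$ is nonzero for infinitely many $i$, negative as well as positive; for hypersurfaces (including every Kleinian singularity, so even your $d=2$ base case) the stable category is $2$-periodic and the cohomology of $\bfE$ repeats in all degrees. The $(d-1)$-Calabi--Yau duality you invoke gives the symmetry $H^i(\bfE)\simeq D\,H^{d-1-i}(\bfE)$, which is a reflection, not a vanishing statement. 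Consequently the Postnikov tower $\tau_{\leq d-1}\bfE\to\cdots\to\tau_{\leq 0}\bfE$ neither starts at $\bfE$ nor terminates at a discrete ring, so the Goodwillie/Corti\~nas-style induction cannot even get going.

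The structural obstruction is that the ungraded singularity category $\cD^{\mathrm{sg}}_\dg(X)$ does not admit a tilting object or an exceptional collection; it is periodic and ``unbounded.'' The paper's proof instead passes to the $\bbZ$-graded singularity category $\cD^{\mathrm{sg},\mathrm{gr}}_\dg(X)\simeq\underline{\mathrm{MCM}}^{\mathrm{gr}}_\dg(R)$, which by Iyama--Takahashi does admit a full exceptional collection with division-algebra endomorphism rings (this is where ``finite-dimensional generator of finite global dimension'' genuinely holds), so that $KH(\cD^{\mathrm{sg},\mathrm{gr}}_\dg(X))\simeq\bigoplus_r KH(D_r)$ has vanishing negative homotopy by Schlichting. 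One then recovers the ungraded category as the dg orbit category $\cD^{\mathrm{sg},\mathrm{gr}}_\dg(X)/(1)^{\bbZ}$ (Keller--Murfet--Van den Bergh) and uses $\bbA^1$-homotopy invariance of $KH$ together with the localization triangle for orbit categories \cite{Orbit} to transfer the vanishing. That graded-then-orbit detour is exactly what replaces your Postnikov strategy; without it I do not see how to control $\bfE$.
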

\begin{proof}
Consider the polynomial algebra $S:=k[t_1, \ldots, t_d]$ as a $\bbZ$-graded $k$-algebra with $\mathrm{deg}(t_i)=1$. Note that since the $G$-action on $S$ preserves the $\bbZ$-grading, $R:=S^G$ is a $\bbZ$-graded $k$-subalgebra of $S$. Following Orlov \cite{Orlov}, in addition to the dg category of singularities $\cD^{\mathrm{sg}}_\mathrm{dg}(X)$, we can also consider the dg category of graded singularities $\cD^{\mathrm{sg}, \mathrm{gr}}_\mathrm{dg}(X)$. By construction, this latter dg category comes equipped with a degree shift dg functor $(1)\colon \cD^{\mathrm{sg}, \mathrm{gr}}_\mathrm{dg}(X) \to \cD^{\mathrm{sg}, \mathrm{gr}}_\mathrm{dg}(X)$. Following Keller \cite[\S7.2]{Orbit-Keller}, let us write $\cD^{\mathrm{sg}, \mathrm{gr}}_\mathrm{dg}(X)/(1)^\bbZ$ for the associated dg orbit category. As proved by Keller-Murfet-Van den Bergh in \cite[Props. A.2 and A.8]{KMV}, the forgetful dg functor  $\cD^{\mathrm{sg},\mathrm{gr}}_\mathrm{dg}(X)\to \cD^{\mathrm{sg}}_\mathrm{dg}(X)$ induces a Morita equivalence $\cD^{\mathrm{sg},\mathrm{gr}}_\mathrm{dg}(X)/(1)^\bbZ\simeq\cD^{\mathrm{sg}}_\mathrm{dg}(X)$. Consequently, since homotopy $K$-theory is an $\bbA^1$-homotopy invariant of dg categories (see \cite[\S8.5]{book}), \cite[Thm.~1.5]{Orbit} yields a distinguished triangle of spectra:
\begin{equation*}
KH(\cD^{\mathrm{sg},\mathrm{gr}}_\mathrm{dg}(X))\stackrel{KH((1))-\id}{\too} KH(\cD^{\mathrm{sg},\mathrm{gr}}_\mathrm{dg}(X)) \to KH(\cD^{\mathrm{sg}}_\mathrm{dg}(X)) \to \Sigma KH(\cD^{\mathrm{sg},\mathrm{gr}}_\mathrm{dg}(X))\,.
\end{equation*}
Let us denote by $\underline{\mathrm{MCM}}^{\mathrm{gr}}(R)$ the stable category of $\bbZ$-graded maximal Cohen-Macaulay $R$-modules. As proved by Iyama-Takahashi in \cite[Cor.~2.12]{IT}, the triangulated category $\underline{\mathrm{MCM}}^{\mathrm{gr}}(R)$ admits a full exceptional collection $(\cE_1, \ldots, \cE_v)$ with $D_r:=\mathrm{End}(\cE_r), 1\leq r \leq v$, finite dimensional division $k$-algebras. Making use of the classical equivalence of categories $\dgHo(\cD^{\mathrm{sg},\mathrm{gr}}_\mathrm{dg}(X))= \cD^{\mathrm{sg},\mathrm{gr}}(X) \simeq \underline{\mathrm{MCM}}^{\mathrm{gr}}(R)$ (see Buchweitz \cite{Buchweitz}), and of the fact that homotopy $K$-theory is an additive invariant of dg categories (see \cite[\S2]{book}), we hence conclude that $KH(\cD^{\mathrm{sg},\mathrm{gr}}_\mathrm{dg}(X))\simeq \oplus_{r=1}^v KH(D_r)$. Since $D_r$ is a (right) regular noetherian ring, we have $KH(D_r)\simeq \bbK(D_r)$ (see Gersten \cite[Prop.~3.14]{Gersten}) and $\bbK_n(D_r)=0$ for every $n <0$ (see Schlichting \cite[Thm.~7]{Schlichting}). Therefore, the proof follows now from the long exact sequence of (stable) homotopy groups associated to the above distinguished triangle of spectra. 
\end{proof}
\section{Proof of Proposition \ref{prop:1}}
Given any prime power $l^\nu$, we have the universal coefficient sequences:
\begin{equation}\label{eq:seq1}
0 \to KH_n(X) \otimes_\bbZ \bbZ/l^\nu \to KH_n(X; \bbZ/l^\nu) \to \{ l^\nu\text{-}\mathrm{torsion}\,\mathrm{in}\,KH_{n-1}(X)\} \to 0  
\end{equation}
\begin{equation}\label{eq:seq2}
0 \to \bbK_n(X) \otimes_\bbZ \bbZ/l^\nu \to \bbK_n(X; \bbZ/l^\nu) \to \{ l^\nu\text{-}\mathrm{torsion}\,\mathrm{in}\,\bbK_{n-1}(X)\} \to 0  \,.
\end{equation}
Theorem \ref{thm:main1}, combined with \eqref{eq:seq1}, implies that $KH_n(X;\bbZ/l^\nu)=0$ for every $n <-1$. Since the base field $k$ is of characteristic zero, $KH_n(X;\bbZ/l^\nu)$ is isomorphic to $\bbK_n(X;\bbZ/l^\nu)$; see Weibel \cite[Page 391]{Weibel1}. Therefore, it follows from \eqref{eq:seq2} that $\bbK_{-2}(X)$ is $l^\nu$-divisible and that $\bbK_n(X), n <-2$, is uniquely $l^\nu$-divisible. The proof follows now from the fact that the prime power $l^\nu$ is arbitrary.
\section{Proof of Theorem \ref{thm:main2}}
The triangle of spectra \eqref{eq:triangle} yields a long exact sequence of abelian groups 
$$
\cdots \too KH_0(\cD^{\mathrm{sg}}_\dg(X)) \too KH_{-1}(X) \too KH_{-1}(\cD^b_\dg(\mathrm{coh}(X))) \too \cdots
$$
Thanks to Proposition \ref{prop:key1}, we have $KH_{-1}(\cD^b_\dg(\mathrm{coh}(X)))=0$. Therefore, it follows that $KH_{-1}(X)$ is a quotient of $KH_0(\cD_\dg^{\mathrm{sg}}(X))$. Let us now compute this latter group. As explained in the proof of Proposition \ref{prop:key2}, $KH(\cD^{\mathrm{sg},\mathrm{gr}}_\mathrm{dg}(X))$ is isomorphic to the direct sum $\oplus_{r=1}^v KH(D_r)$. Since by assumption $k$ is algebraically closed, all the finite dimensional division $k$-algebras $D_r$ are isomorphic to $k$. Therefore, since $KH_0(k)\simeq \bbK_0(k)\simeq \bbZ$ and $KH_{-1}(k)\simeq \bbK_{-1}(k)=0$, we conclude from the long exact sequence of (stable) homotopy groups associated to the distinguished triangle of spectra constructed in the proof of Proposition \ref{prop:key2} and from the natural  identification $K_0(\cD^{\mathrm{sg},\mathrm{gr}}_\mathrm{dg}(X))=K_0(\cD^{\mathrm{sg},\mathrm{gr}}(X))$, that the group $KH_0(\cD^{\mathrm{sg}}_\dg(X))$ is isomorphic to the cokernel of the induced homomorphism:
\begin{equation}\label{eq:homo-1}
K_0((1))-\id \colon K_0(\cD^{\mathrm{sg},\mathrm{gr}}(X))\too K_0(\cD^{\mathrm{sg},\mathrm{gr}}_\mathrm{dg}(X))\,.
\end{equation}
Via the equivalence of categories $\cD^{\mathrm{sg},\mathrm{gr}}(X)\simeq \underline{\mathrm{MCM}}^{\mathrm{gr}}(R)$, \eqref{eq:homo-1} reduces to
\begin{equation}\label{eq:homo-2}
K_0((1))-\id \colon K_0(\underline{\mathrm{MCM}}^{\mathrm{gr}}(R))\too K_0(\underline{\mathrm{MCM}}^{\mathrm{gr}}(R))\,.
\end{equation}
As proved by Amiot-Iyama-Reiten in \cite[Thm.~4.1]{AIR}, the triangulated category $\underline{\mathrm{MCM}}^{\mathrm{gr}}(R)$ admits a tilting object $T$. Moreover, the associated $k$-algebra $A:=\mathrm{End}(T)$ is finite dimensional and of finite global dimension. Let us denote by $\cD^b(\mathrm{mod}(A))$ the bounded derived category of finitely generated (right) $A$-modules. By construction, this latter category comes equipped with the Serre functor $\mathrm{S}$ and with the Auslander-Reiten translation functor $\tau$. As proved in {\em loc. cit.}, via the equivalence of categories $\underline{\mathrm{MCM}}^{\mathrm{gr}}(R) \simeq \cD^b(\mathrm{mod}(A))$ induced by the tilting object $T$, the degree shift functor $(1)$ corresponds to the functor $\mathrm{S}^{-1}\Sigma^{d-1}$. Since $\mathrm{S}^{-1}\Sigma=\tau$, we hence conclude that the above homomorphism \eqref{eq:homo-2} reduces to 
\begin{equation}\label{eq:homo-3}
(-1)^{d-2}\Phi_A-\id \colon K_0(\cD^b(\mathrm{mod}(A))) \too K_0(\cD^b(\mathrm{mod}(A)))\,,
\end{equation}
where $\Phi_A$ stands for the inverse of the Coxeter matrix of $A$. As proved in \cite[\S5]{AIR}, the $k$-algebra $A$ is isomorphic to the $k$-algebra $kQ/\langle \rho\rangle$ associated to the quiver with relations $(Q,\rho)$ introduced at \S\ref{sec:introduction}. On the one hand, this implies that the number of simple (right) $A$-modules agrees with the number of vertices of $Q$, \ie $v=m-1$. On the other hand, this implies that the matrix $\Phi_A$ can be written as $-C(C^{-1})^T$, where $C_{ij}$ equals the number of arrows in $Q$ from $j$ to $i$ (counted modulo the relations). Consequently, the above homomorphism \eqref{eq:homo-3} reduces to the (matrix) homomorphism $\mathrm{M}\colon \oplus^{m-1}_{r=1} \bbZ \to \oplus^{m-1}_{r=1} \bbZ$ introduced at \S\ref{sec:introduction}. This concludes the proof.

\medbreak\noindent\textbf{Acknowledgments:}
This work was conceived during a visit to Christian Haesemeyer at the University of Melbourne. The author is very grateful to Christian for stimulating discussions, for his interest on these results, and for its kind hospitality. The author also would like to thank Guillermo Corti\~nas for his interest on these results and for correcting a typo in a previous version of this article.

\end{document}

\end{proof}